\titlespacing{\section}{10pt}{10pt}{10pt}
\newcommand{\Hom}{\mathbf{H}}
\newcommand{\Cycles}{\mathbf{Z}}
\newcommand{\Boundaries}{\mathbf{B}}
\newcommand{\Chains}{\mathbf{C}}
\newcommand{\C}{\mathbb{C}}
\newcommand{\co}{\colon\thinspace}
\newcommand{\tri}{\mathfrak{T}}
\newcommand{\M}{M}
\newcommand{\tv}{\mathrm{TV}}
\newcommand{\adm}{\mathrm{Adm}}
\newcommand{\Z}{\mathbb{Z}}
\newcommand{\zmin}{z^-}
\newcommand{\zmax}{z^+}
\newtheorem{thm}{Theorem}
\newtheorem{lemma}{Lemma}
\newtheorem{corollary}{Corollary}
\theoremstyle{definition}
\newcommand{\ditto}[1][.4pt]{\xrfill{#1}~\textquotedbl~\xrfill{#1}}
\title{A polynomial time algorithm to compute quantum invariants of 
$3$-manifolds with bounded first Betti number.\thanks{This work is supported by
the Australian Research Council (projects DP140104246 and DP150104108).}}
\author{Cl\'ement Maria}
\author{Jonathan Spreer}
\affil{The University of Queensland, Australia\\
\texttt{c.maria@uq.edu.au}, \
  \texttt{j.spreer@uq.edu.au}}
\begin{document}

\maketitle

\begin{abstract}
In this article, we introduce a fixed parameter tractable algorithm for computing the Turaev-Viro invariants $\tv_{4,q}$, using the dimension of the first homology group of the manifold as parameter.

This is, to our knowledge, the first parameterised algorithm in computational $3$-manifold topology using a topological parameter. The computation of $\tv_{4,q}$ is known to be \#P-hard in general; using a topological parameter provides an algorithm polynomial in the size of the input triangulation for the extremely large family of $3$-manifolds with first homology group of bounded rank.

Our algorithm is easy to implement and running times are comparable with running times to compute integral homology groups for standard libraries of triangulated $3$-manifolds. The invariants we can compute this way are powerful: in combination with integral homology and using standard data sets we are able to roughly double the pairs of $3$-manifolds we can distinguish.

We hope this qualifies $\tv_{4,q}$ to be added to the short list of standard properties (such as orientability, connectedness, Betti numbers, etc.) that can be computed ad-hoc when first investigating an unknown triangulation.
\end{abstract}

\medskip
\noindent
{\bf Keywords}: fixed parameter tractable algorithms, Turaev-Viro invariants, 
triangulations of $3$-manifolds, (integral) homology, almost normal surfaces, 
combinatorial algorithms.

\newpage

\section{Introduction}
\label{sec:intro}

In geometric topology, {\em invariants} are properties of manifolds telling
pairs of non-homeomorphic manifolds apart. Invariants are important, 
since deciding whether two manifolds are topologically equivalent---the 
so-called {\em homeomorphism
problem}---is remarkably difficult in dimension three, and undecidable in 
dimensions four and higher. Thus, invariants help to settle this
potentially unsolvable, yet fundamentally important problem in many, albeit not all cases.

This article is concerned with the special case of $3$-dimensional manifolds,
where the homeomorphism problem is difficult, but mathematically settled~\cite{Morgan14GeometrizationConjecture}. 
More precisely, in this article we focus on the family of \emph{Turaev-Viro invariants} $\tv_{r,q}$, parameterised by integers $r$ and $q$, which are amongst the most powerful invariants for 
$3$-manifolds~\cite{turaev92-invariants}. Similarly to the Jones polynomial for 
knots, they derive from quantum field theory but can be computed by purely 
combinatorial means. Algorithms to compute these invariants are implemented for 
$3$-manifolds represented by triangulations (see the software package 
\emph{Regina}~\cite{regina}) and special spines (see the software 
\emph{Manifold Recogniser}~\cite{matveev03-algms,recogniser}), and they play a 
key role in enumerating $3$-manifolds of bounded topological complexity (an
analogue to the famous knot tables)~\cite{burton07-nor7,matveev03-algms}. 

\paragraph{Complexity and existing algorithms.} Turaev-Viro invariants are defined as exponentially large state-sums over combinatorial data---so called {\em admissible 
colourings}---defined on the edges and triangles of a triangulated $3$-manifold. A naive exponential time algorithm to compute them consists of enumerating all potential colourings of the triangulation, and sum up their weights.

Recently, new algorithms and techniques have been introduced to improve performance. In~\cite{Burton15TuraevViro}, Burton and the authors introduce a fixed parameter tractable algorithm for computing any Turaev-Viro invariant, using the \emph{treewidth of the dual graph of the triangulation} as parameter. Algorithms can be improved further by pruning the search space for admissible colourings for some of the Turaev-Viro invariants~\cite{Maria15TVAlgos}. While significantly improving both the practical and theoretical complexity of the computation, these algorithms are still exponential in both time and space complexity.

Moreover, it is known that, on a $3$-manifold represented by a triangulation, computing the Turaev-Viro invariant $\tv_{4,q}$ is 
\#P-hard~\cite{Burton15TuraevViro,kirby04-nphard}. Hence, the invariant $\tv_{4,q}$ is of particular interest for complexity theory. In this article we focus on this particular invariant. 

\paragraph{Our contribution.} In this article we introduce a \emph{fixed parameter tractable algorithm} to compute $\tv_{4,q}$ on a triangulation $\tri$, using the {\em rank of the first homology group} as parameter for the algorithm: 

\begin{thm}
  Let $\tri$ be a $1$-vertex\footnote{Having a $1$-vertex triangulation is a 
  rather weak restriction, see the discussion in Section~\ref{sec:fpt}.} 
  $n$-tetrahedra triangulation of a $3$-manifold with first Betti number
  $\beta_1 (\tri,\mathbb{Z}_2)$, then there exists an algorithm to compute 
  $\tv_{4,q} (\tri)$, $q \in \{1,3\}$,
  with running time
  $O(2^{\beta_1 (\tri, \mathbb{Z}_2)} n^3)$ and $O(n^2)$ space requirements.
\end{thm}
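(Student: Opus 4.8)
The plan is to exploit the very rigid structure of the admissible colourings at $r=4$, collapsing the exponentially large state sum defining $\tv_{4,q}(\tri)$ into a sum indexed by $\mathbb{Z}_2$-cohomology whose individual terms are evaluated by linear algebra over $\mathbb{Z}_2$. First I would recall the state-sum definition: $\tv_{4,q}(\tri)$ is a normalised sum over admissible colourings of the edges of $\tri$ by colours in $\{0,1,2\}$, where a triangle with edge colours $(i,j,k)$ is admissible iff $i+j+k$ is even, the triangle inequalities hold, and $i+j+k\le 4$; here $q\in\{1,3\}$ is exactly the set of admissible parameters for $r=4$. The parity condition is decisive: writing $z(e)\in\mathbb{Z}_2$ for the parity of the colour of edge $e$, admissibility forces every triangle to contain an even number of odd-coloured edges, i.e.\ $\sum_{e\subset\partial t}z(e)=0$ for each triangle $t$. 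Hence the parity vector $z$ of any admissible colouring is a $\mathbb{Z}_2$-$1$-cocycle, $z\in Z^1(\tri;\mathbb{Z}_2)$, and conversely every cocycle is realised (colour the odd edges by $1$ and all even edges by $0$).

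Next I would count these cocycles using the $1$-vertex hypothesis. Over $\mathbb{Z}_2$ one has $\dim Z^1=\beta_1(\tri,\mathbb{Z}_2)+\dim B^1$ with $\dim B^1=\dim C^0-\dim H^0=V-1$, which vanishes when the number of vertices is $V=1$. Thus there are exactly $2^{\beta_1(\tri,\mathbb{Z}_2)}$ admissible parities, and this is precisely where both the topological parameter and the $1$-vertex restriction enter (for general $V$ one would instead pay a factor $2^{V-1}$, matching the footnote's remark that the restriction is weak). A basis of $Z^1$ is obtained from a single Gaussian elimination on the $\mathbb{Z}_2$-coboundary matrix, after which the $2^{\beta_1}$ cocycles are enumerated as $\mathbb{Z}_2$-linear combinations of the basis vectors.

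It then remains, for each fixed cocycle $z$, to sum the weights of all admissible colourings with parity $z$. The odd edges are forced to colour $1$, while each even edge carries a residual binary choice between colours $0$ and $2$, which I record by a secondary labelling $\psi$. Analysing admissibility shows $\psi$ is again constrained linearly (an all-even triangle must carry an even number of $2$'s, whereas a triangle with two odd edges imposes no condition on its even edge), so the admissible $\psi$ range over an affine $\mathbb{Z}_2$-subspace that is again read off by Gaussian elimination. Since $\dim_q(0)=\dim_q(2)=1$, the edge quantum-dimension factors are insensitive to $\psi$, so the only $\psi$-dependence of a summand comes from the $6j$-symbols. The crux of the argument, and the step I expect to be the main obstacle, is to pin down this dependence: working out the explicit $r=4$ $6j$-symbols, I expect each summand to equal a factor depending only on $z$ times a sign $(-1)^{Q_z(\psi)}$, where $Q_z$ is a polynomial of degree at most two over $\mathbb{Z}_2$ (the quadratic coupling coming from the nontrivial signs such as $F^{\psi\sigma\psi}_\sigma=-1$ in the underlying Ising fusion data). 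Granting this, the inner sum is a Gauss sum over an affine $\mathbb{Z}_2$-space, evaluable in polynomial time by diagonalising $Q_z$ and reading off the rank and Arf-type invariant of the associated bilinear form, rather than by enumerating the exponentially many labellings $\psi$.

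Finally I would assemble the complexity. The relevant $\mathbb{Z}_2$-incidence matrices (edge--triangle and edge--tetrahedron) have size $O(n)\times O(n)$, so each Gaussian elimination costs $O(n^3)$ and occupies $O(n^2)$ space. Performing the inner Gauss-sum evaluation once per cocycle yields total time $O(2^{\beta_1(\tri,\mathbb{Z}_2)}\,n^3)$; and since cocycles are processed one at a time while accumulating the running total, only a constant number of $O(n)\times O(n)$ matrices are ever stored, giving space $O(n^2)$. The genuine mathematical content is the third step: verifying that the $r=4$ $6j$-symbols make the residual summation a low-degree Gauss sum is exactly what collapses the inner exponential blow-up to the claimed polynomial factor.
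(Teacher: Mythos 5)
Your proposal follows essentially the same route as the paper: reduce each admissible $4$-colouring to its parity cocycle (the paper's ``reduction'' to $\adm(\tri,3)\cong \Hom^1(\tri,\Z_2)$ for a one-vertex triangulation, giving the $2^{\beta_1}$ outer loop), describe the fibre over a fixed cocycle as the solution space of a linear system over $\Z_2$ in the residual $0/1$ choices on even edges, and evaluate the signed sum over that fibre by putting a quadratic form over $\Z_2$ into standard form and reading off its number of zeroes. The single step you leave as ``granting this'' is exactly the paper's Lemma~\ref{lem:octopi} together with Equation~\ref{eq:quad}: direct inspection of the $r=4$ weights shows that a colouring's weight differs from that of its reduction by $(-1)^{\alpha}$, where $\alpha$ counts octagon-type tetrahedra, and $\alpha \bmod 2 = \sum_i \hat{\theta}(x_i)\hat{\theta}(y_i)$ over opposite zero-coloured edge pairs is indeed the degree-two form you predict.
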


The algorithm interprets $\tv_{4,q}$ as a sum over the weights of a family of 
embedded surfaces within the triangulation. We show that each of these surfaces 
can be assigned a $1$-cohomology class $\theta \in H^1 (\tri, \mathbb{Z}_2)$
which defines its weight, up to a sign. We show that, for each 
such $\theta$, the set of surfaces associated to $\theta$ (with all of its 
members necessarily having the same weight, up to a sign) can be efficiently 
described as the solution space of a linear system of size $O(n)$.
The sign of the weight of a fixed surface in this space is determined by the 
parity of the number of certain surface pieces (octagons, or so-called ``almost 
normal'' surface pieces). We show that the number of surfaces in the solution 
space with a fixed parity equals the number of zeroes of a quadratic form over 
$\mathbb{Z}_2$ on this space. Following the theory of quadratic forms over 
$\mathbb{Z}_2$ this quadratic form can be transformed into standard form 
yielding its number of zeroes. This is all we need to compute the sum of weights
over all surfaces corresponding to $\theta$. Summing over all cohomology classes
leads to the algorithm.

\paragraph{Discussion of the parameter.} As mentioned above, the state of the 
art algorithm to compute $\tv_{r,q}$ is fixed parameter tractable in the 
\emph{treewidth} of the dual graph of the triangulation. 
The benefits of our new fixed parameter tractable algorithm for the special 
case of $r=4$, are thus entirely due to the first Betti number as the 
parameter in use, 
which is superior to the treewidth in several key aspects: 

\begin{description}
  \item[Parameter is topological] Treewidth is a property of the 
    triangulation in use. This means that even triangulations of very simple 
    manifolds, such as the $3$-sphere or other lens spaces, can be represented 
    by an input triangulation of arbitrarily high treewidth.
    The first Betti number is a topological invariant of the underlying 
    manifold and thus {\em independent of the choice of triangulation}. 
    Eliminating such a dependence on the combinatorial structure of a
    triangulation is highly desirable in the field of computational topology.

    In fact, to our knowledge, this is the first non-trivial fixed parameter
    tractable algorithm of a problem in $3$-manifold topology using a 
    topological parameter. Note that, for algorithmic problems dealing with surfaces,
    similar results exist. For instance, graph embeddability is known 
    to be fixed parameter tractable in the genus of the surface 
    \cite{Mohar99FPTGraphEmbeddings}.
  \item[Many inputs have small $\beta_1$ parameter] Bounded treewidth is a condition which
    is closed under minors. It thus follows from standard results in forbidden 
    minor theory and the theory of triangulations that, for a given number of 
    tetrahedra $n$, the number of triangulations of bounded treewidth is bounded from above by an exponential function. On the other hand, the number of $3$-manifolds which can be 
    triangulated with $\leq n$ tetrahedra grows 
    super-exponentially fast in $n$. Thus many $3$-manifolds only have few 
    triangulations with small treewidth. In contrast, there exist extremely 
    large classes of $3$-manifolds with uniformly bounded first Betti number $\beta_1$, 
    and for each one of them {\em all} triangulations necessarily must 
    have bounded first Betti number.

    Hence, despite the computation of $\tv_{4,q}$ being \#P-hard, the algorithm presented in this article has polynomial complexity for very large families of inputs. 
  \item[Parameter is efficiently computable] Given a graph, computing
    its treewidth is NP-complete~\cite{arnborg87-embeddings}. The problem is known to be fixed parameter 
    tractable in the natural parameter~\cite{bodlaender96-linear}, but in practice this algorithm 
    is not the method of choice. Thus, in practice, it can be difficult to 
    decide whether a given triangulation has a small treewidth. In contrast, 
    the running time of computing the first Betti number of a triangulation is
    a small polynomial, regardless of the size of the Betti number.
\end{description}

Furthermore, while the space requirements of the 
treewidth algorithm is exponential in the parameter, our 
algorithm only uses {\em quadratic space} in the input size, regardless of the 
size of the parameter.

\paragraph{Structure of the article.}
The paper is organised as follows. 
After going over some important concepts used
in the article, we describe the FPT algorithm for $\tv_{4,q}$ in three steps. 
In Section~\ref{sec:surfaces}, we start by describing embedded surfaces defined
by admissible colourings, and show that their weights can be interpreted as a  
function of their topological and combinatorial properties. In doing so, we split
the sum of weights defining $\tv_{4,q} (\tri)$ for a triangulation $\tri$ by 
grouping colourings by associated $1$-cohomology classes.
In Section~\ref{ssec:poly}, we introduce a polynomial time 
algorithm to compute the weight participation of a set of colourings assigned to
a given $1$-cohomology class. 
In Section~\ref{ssec:fpt}, the FPT algorithm then finally follows by running this 
procedure on all of the $2^{\beta_1 (\tri,\mathbb{Z}_2)}$ cohomology classes.

In Section~\ref{sec:counting} we discuss implications of the algorithm from a 
complexity theoretic point of view. Recall that $\tv_{4,q}$ is known to be 
\#P-hard due to work by Kirby and Melvin \cite{kirby04-nphard} and a slight 
adjustment by Burton and the authors \cite{Burton15TuraevViro}. Using the 
structure of our algorithm we show that $\tv_{4,q}$ is not harder than counting 
(see Section~\ref{sec:counting}), thus further bounding the complexity of 
computing $\tv_{4,q}$ from above.

In Section~\ref{sec:expts} we focus on the benefits of the new algorithm 
for research in computational topology---which strongly depend on the power of 
$\tv_{4,q}$ to distinguish between manifolds with equal homology. 
We provide theoretical
and experimental evidence that our algorithm, in combination with integral 
homology,\footnote{Integral homology groups, i.e., 
homology groups with integer 
coefficients, are strictly more powerful than homology groups with finite field 
coefficients, but can still be computed in polynomial time.} provides an 
efficient tool to distinguish between almost twice as many manifolds as integral homology on its own.

\section{Background}
\label{sec:bg}

\paragraph{Manifolds and generalised triangulations.} 
Throughout this article closed $3$-manifolds are given in the widely
used form of \emph{generalised triangulations}. Generalised triangulations
are more general than simplicial complexes and can encode a wide range of 
manifolds, and very complex topologies, with very few tetrahedra.

More precisely, a generalised triangulations $\tri$ of a (closed) $3$-manifold $\M$ 
is a collection of $n$ abstract tetrahedra $T = \{ \Delta_1,\ldots,\Delta_n \}$ 
together with $2n$ {\em gluing maps} identifying their $4n$ triangular faces 
in pairs, such that the underlying topological space is homeomorphic to $\M$.
An equivalence class of vertices, edges, or triangles of $\Delta_i$, $ 1 \leq i \leq n$,
identified under the gluing maps, is referred to as a single vertex, edge, or triangle
of $\tri$. We denote by $V$, $E$, and $F$ the sets of such vertices, edges, and triangles, 
respectively, of $\tri$. It is common in practical applications to have 
\emph{one-vertex triangulations} where all $4n$ vertices of $\Delta_i$, $ 1 \leq i \leq n$, 
are identified to one point. The number of tetrahedra $n$ of $\tri$ is often referred to as 
the {\em size} of the triangulation.

Since, by construction, every $n$-tetrahedra $v$-vertex closed $3$-manifold 
triangulation $\tri$ must have $2n$ triangles, and every closed $3$-manifold has 
Euler characteristic zero, it follows that $\tri$ must have $n+v$ edges.

We refer the reader to~\cite{jaco03-0-efficiency} for more details on generalised triangulations. 

\paragraph{Homology, cohomology and Betti numbers.} 
We use basic facts about the homology and cohomology groups of a (triangulated)
$3$-manifold $\tri$ with $\Z_2$-coefficients. We denote by $\beta_1 (\tri, \Z_2)$ 
the first Betti number of $\tri$, i.e., the rank of the first homology group 
with $\Z_2$ coefficients.

See Appendix~\ref{app:homology} for a brief, and \cite{hatcher02-algebraic} for a 
longer introduction to (co-)homology theory.

\paragraph{Turaev-Viro invariants.} 
Turaev-Viro invariants are part of a larger group of {\em invariants 
of Turaev-Viro type} $\tv_r$, parameterised by an integer $r \geq 3$. We 
first define this more general group of invariants before having a closer 
look at the original {\em Turaev-Viro invariants} $\tv_{r,q}$, which also 
depend on a second integer $0 < q < 2r$ coprime to $r$.

Let $\tri$ be a generalised triangulation of a closed $3$-manifold $\M$,
let $r \geq 3$, be an integer, and let $I = \{0, 1/2, 1, 3/2, \ldots, (r-2)/2\}$.
A \emph{colouring} of $\tri$ is defined to be a map $\theta\co E \to I$
from the edges of $\tri$ to $I$. A colouring $\theta$ is \emph{admissible} if, for each triangle of 
$\tri$, the three edges $e_1$, $e_2$, and $e_3$ bounding the triangle satisfy 
the 
\begin{itemize}
  \item \emph{parity condition} $\theta(e_1)+\theta(e_2)+\theta(e_3)\in \Z$;
  \item \emph{triangle inequalities} $\theta(e_i) \leq \theta(e_j) + 
    \theta(e_k)$, $\{i,j,k\} = \{ 1,2,3\}$; and 
  \item \emph{upper bound constraint} $\theta(e_1)+\theta(e_2)+\theta(e_3)\leq 
    r-2$.
\end{itemize}
The set of such admissible colourings is denoted by $\adm (\tri,r)$.

For each admissible colouring $\theta$, and for each vertex $w \in V$, edge 
$e \in E$, triangle $f \in F$ or tetrahedron $t \in T$ we define 
\emph{weights} $|w|_{\theta}, |e|_{\theta}, |f|_{\theta}, 
|t|_{\theta} \in \C$. The 
weights of vertices are constant, and the weights of edges, triangles and 
tetrahedra only depend on the colours of edges they are incident to.
Using these weights, we define the \emph{weight of the colouring} to be
\begin{equation}
|\tri|_{\theta} =
    \prod_{w \in V} |w|_{\theta} \times
    \prod_{e \in E} |e|_{\theta} \times
    \prod_{f \in F} |f|_{\theta} \times
    \prod_{t \in T} |t|_{\theta},
\end{equation}

\emph{Invariants of Turaev-Viro type} of $\tri$ are defined as sums of the 
weights of all admissible colourings of $\tri$, that is
$\tv_{r}(\tri) = \sum_{\theta \in \adm(\tri,r)} |\tri|_{\theta}$.
In~\cite{turaev92-invariants} Turaev and Viro show that, whenever the weighting 
system satisfies some identities, $\tv_{r}(\tri)$ is a topological invariant of 
the manifold; that is, if $\tri$ and $\tri'$ are generalised triangulations of 
the same closed 3-manifold $\M$, then $\tv_{r}(\tri) = \tv_{r}(\tri')$.

More specifically, in Appendix~\ref{app:weight} we give the definition of the 
weights in the case of the original Turaev-Viro invariants, which we use in this article. 
These not only depend on $r$ but also on a second integer $0 < q < 2r$, with 
$\gcd(r,q) = 1$. We denote the corresponding invariant by $\tv_{r,q}$. 

We continue to write $| \cdot |_{\theta}$ for the weights where $r$ and/or $q$ are
given from context or the statement holds in more generality. 

For an $n$-tetrahedra triangulation $\tri$ with $v$ vertices 
there is a simple backtracking algorithm to compute 
$\tv_{r,q}(\tri)$ by testing the $(r-1)^{v+n}$ possible colourings for 
admissibility and computing their weights. The case $r=3$ can however be 
computed in polynomial time, due to a connection between $\adm(\tri,3)$ and 
cohomology, see 
\cite{Burton15TuraevViro,matveev03-algms}. The case $r=4$, which we study in this article, 
is \#P-hard to compute in general~\cite{Burton15TuraevViro,kirby04-nphard}.

\paragraph{Quadratic forms over $\Z_2$.}
A \emph{quadratic form over $\Z_2$} is a form $\phi\colon \Z_2^k \to \Z_2$ satisfying 
$\phi\colon v \mapsto v^T R v$, for a fixed $(k \times k)$-matrix $R$.
Two quadratic forms $\phi\colon v \mapsto v^T R v$ and $\phi\colon v \mapsto v^T S v$ 
are called {\em equivalent}, if there exists a matrix $C \in \operatorname{GL}(k,\Z_2)$ such that 
$R = C^T S C$. Equivalent quadratic forms have the same number of zeroes.
A quadratic form in $k$ indeterminants is called {\em degenerate} whenever 
it is equivalent to a quadratic form depending on less than $k$ indeterminants.
Otherwise it is called {\em non-degenerate}.

Note that the theory of quadratic forms over fields of characteristic two
is significantly distinct from the general theory.
Most quadratic forms over fields of characteristic two can not be represented by symmetric 
matrices and thus are not diagonalisable. Moreover, a diagonalisable quadratic form over a field of characteristic two must be 
equivalent to a quadratic form in (at most) one indeterminant. 

In what follows, whenever we work with a fixed quadratic form $\phi$ we 
assume it is represented by an upper triangular matrix -- which is always possible.

\begin{lemma}[see Theorem 6.30 in~\cite{Lidl1997Finite} ]
  Let $\phi$ be a (possibly degenerate) quadratic form over $\Z_2$. Then
  $\phi$ is equivalent to the direct sum of the all-zero quadratic form in 
  $k-\ell$ indeterminants (admitting $2^{k-\ell}$ zeroes), and one of the following 
  non-degenerate quadratic forms in $\ell \leq k$ indeterminants.
\[
\left\{
\begin{array}{lll}
  \text{If $\ell$ is odd}, & 
    x^T \tilde{R} x = x_1x_2 + x_3x_4 + \ldots + x_{\ell-2}x_{\ell-1} + x_{\ell}^2,
  & \text{admitting $2^{\ell-1}$ zeros}. \\
  \text{If $\ell$ is even}, & 
    x^T \tilde{R} x = x_1x_2 + x_3x_4 + \ldots + x_{\ell-1}x_{\ell}, 
&
\text{admitting $2^{\ell-1} + 2^{\frac{\ell}{2} -1}$ zeros}, \\

  \text{or} & 
    x^T \tilde{R} x = x_1x_2 + x_3x_4 + \ldots + x_{\ell}x_{\ell} + x_{\ell-1}^2 + x_{\ell}^2.
  &
\text{admitting $2^{\ell-1} - 2^{\frac{\ell}{2} -1}$ zeros}. \\
\end{array}\right.
\]
\label{lem:quadform}
\end{lemma}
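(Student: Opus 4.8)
The plan is to prove both the normal-form statement and the zero counts through the alternating bilinear form associated with $\phi$, together with an exponential-sum computation that makes the zero counts multiplicative over direct sums. Writing $\phi(v)=v^{T}Rv$, I set $B(u,v)=u^{T}(R+R^{T})v$; this is the bilinear form obtained by polarising $\phi$, since over $\Z_2$ one has $\phi(u+v)=\phi(u)+\phi(v)+B(u,v)$. As $R+R^{T}$ is symmetric with vanishing diagonal, $B$ is a symplectic (alternating) form, and under $R\mapsto C^{T}RC$ it transforms by congruence $B\mapsto C^{T}BC$, so its rank is an invariant of the equivalence class of $\phi$. The whole argument simultaneously puts $B$ into symplectic normal form and records how $\phi$ behaves on the resulting pieces.

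First I would decompose $\Z_2^{k}=H_1\perp\dots\perp H_p\perp W$, an orthogonal (with respect to $B$) direct sum of hyperbolic planes $H_i=\langle e_i,f_i\rangle$ with $B(e_i,f_i)=1$, and the radical $W=\{w:B(w,\cdot)=0\}$. On each plane $\phi$ restricts to $a_ix_i^{2}+x_iy_i+b_iy_i^{2}$ with $a_i=\phi(e_i)$, $b_i=\phi(f_i)$; a direct substitution inside the plane normalises this to $x_iy_i$ whenever $a_ib_i=0$ and to the anisotropic form $x_i^{2}+x_iy_i+y_i^{2}$ when $a_i=b_i=1$. On $W$ the identity $\phi(u+v)=\phi(u)+\phi(v)$ holds, so $\phi|_{W}$ is a linear functional; hence $W$ splits as $\langle g\rangle\oplus W_0$ with $\phi(g)=1$ and $\phi|_{W_0}=0$ (the summand $\langle g\rangle$ absent exactly when $\phi|_{W}\equiv 0$), contributing at most one square $z^{2}$ together with an all-zero form on $W_0$.

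The remaining structural work is parity bookkeeping, for which I would invoke two explicit substitutions over $\Z_2$: that two anisotropic planes are jointly equivalent to two hyperbolic planes, and that a square together with an anisotropic plane is equivalent to a square together with a hyperbolic plane. Applying the first repeatedly leaves at most one anisotropic plane; the second then forces that plane to disappear as soon as the radical square is present. Setting $\ell=2p$ or $\ell=2p+1$ according to whether the radical square is absent or present, and tracking parity, yields exactly the listed normal forms: in the odd case a string of hyperbolic planes plus a single square; in the even case either all hyperbolic planes (no surviving anisotropic plane) or hyperbolic planes plus one anisotropic plane; the leftover $k-\ell$ directions carry the all-zero form. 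Finally I would read off the zero counts from the exponential sum $\sigma(\phi)=\sum_{v}(-1)^{\phi(v)}$, which satisfies $N_0(\phi)=\tfrac12\bigl(2^{\dim}+\sigma(\phi)\bigr)$ and is multiplicative, $\sigma(\phi\oplus\psi)=\sigma(\phi)\sigma(\psi)$. A one-line computation gives $\sigma=2$ on a hyperbolic plane, $\sigma=-2$ on an anisotropic plane, $\sigma=0$ on a square, and $\sigma=2$ on each all-zero variable; multiplying over the blocks reproduces $2^{\ell-1}$, $2^{\ell-1}+2^{\ell/2-1}$, and $2^{\ell-1}-2^{\ell/2-1}$ respectively, while the all-zero factor multiplies the count by $2^{k-\ell}$ (here the product of the separate zero counts is legitimate precisely because one factor vanishes identically).

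I expect the main obstacle to be the characteristic-two phenomena the excerpt already flags: $\phi$ cannot be diagonalised, so the argument must run through the symplectic form $B$ rather than through an orthogonal basis, and one must carefully separate $\mathrm{rad}(B)$ from the locus where $\phi$ itself vanishes---it is on $\mathrm{rad}(B)$ that $\phi$ degenerates to a linear map and produces the isolated square. The accompanying parity bookkeeping, namely the Arf-type additivity that collapses two anisotropic planes and the way a single square erases the distinction between the two even-dimensional forms, is the step most likely to hide sign or index-shift errors.
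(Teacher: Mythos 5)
Your proof is correct, but it takes a genuinely different route from the one the paper relies on. The paper simply defers to the constructive proof of Theorem~6.30 in Lidl--Niederreiter, which proceeds by induction: repeatedly split off a hyperbolic block $x_1x_2$ by explicit variable substitutions (their Lemma~6.29), handle degeneracies as they arise, and resolve the three-way case distinction only at the end when at most two variables remain. You instead polarise $\phi$ to the alternating form $B(u,v)=u^T(R+R^T)v$, take a symplectic decomposition into hyperbolic planes plus the radical, classify $\phi$ on each plane (hyperbolic vs.\ anisotropic), observe that $\phi$ is linear on the radical (producing at most one square plus the all-zero part), and then clean up with the two Arf-type relations $E\perp E\cong H\perp H$ and $z^2\perp E\cong z^2\perp H$ (the latter is realised by $z\mapsto z+x+y$). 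Your zero counts via the character sum $\sigma(\phi)=\sum_v(-1)^{\phi(v)}$, with $N_0=\tfrac12(2^{\dim}+\sigma)$ and multiplicativity over orthogonal sums, check out in all three cases, and your parenthetical about why the all-zero factor simply multiplies the count by $2^{k-\ell}$ correctly addresses the one place where naively multiplying zero counts would be wrong. Your approach buys conceptual clarity: $\ell$ is visibly an invariant (the rank of $B$, corrected by whether $\phi$ vanishes on the radical), and the counting argument replaces case analysis by a two-line computation. What the textbook induction buys --- and what the paper actually needs downstream --- is an explicitly algorithmic reduction to normal form, since Section~\ref{ssec:poly} runs this reduction as a subroutine in $O((m+|E_2|)^2)$ time per splitting step; your symplectic Gram--Schmidt is equally polynomial-time, but you would need to say so, and you should also write out the two ``explicit substitutions'' you invoke rather than merely asserting their existence.
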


Given a quadratic form $\phi \colon \mathbb{Z}_2^k \to \mathbb{Z}$ in indeterminants $x_1, x_2, \ldots , x_k$
we can determine $\ell$ and reduce $\phi$ to either one of the three forms of 
Lemma~\ref{lem:quadform} in polynomial time following the constructive 
proof of Theorem 6.30 in~\cite{Lidl1997Finite}. 

The proof repeatedly splits $\phi$ into blocks of form $x_1 x_2$ and a 
new quadratic form $\phi'$ in indeterminants $x_3, x_4, \ldots , x_k$
(this step is described in detail in Lemma 6.29 in~\cite{Lidl1997Finite}). 
One such splitting step requires a constant number of
variable relabelings and sparse variable substitutions, and is able to 
detect and handle degeneracies. The distinction between the three cases is made in the 
last step when $k \leq 2$, where at most $2^3 = 8$ possible cases have to be considered.

The overall number of zeroes of $\phi$
follows by multiplying the number of zeroes of the non-degenerate part
by $2^{k-\ell}$ (note that the all-zero quadratic form never
evaluates to $1$).

\section{Surfaces interpretation and weight system}
\label{sec:surfaces}

\paragraph{Surface interpretation for $r=4$.} An admissible colouring for the triangulation $\tri$ may be interpreted as a \emph{spinal surface} embedded within $\tri$, that is, a surface intersecting the faces of $\tri$ transversally, whose intersection with each triangle is a collection of straight line segments, and with each tetrahedron is a collection of topological disks. To define a spinal surface $S_{\theta}$ from an admissible colouring $\theta$, interpret $\theta(e)$ as half the number of times $S_\theta$ intersects $e$. The admissibility constraints ensure that there is a well-defined and unique spinal surface with such an intersection pattern: all such surfaces may be classified for every value of $r$~\cite{frohman08-quantum,maria16-normcurves-yrf}, and all intersection patterns for $r=4$ are pictured in Figure~\ref{fig:all-col}. 

\begin{figure}
\centering
\includegraphics[width=15cm]{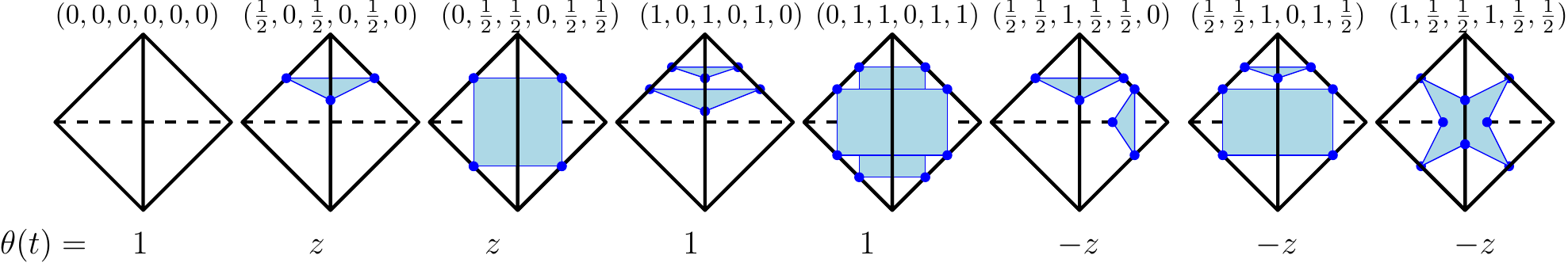}
\caption{All tetrahedron intersection patterns of an admissible colouring for $r=4$, their edge colours (top) and tetrahedron weights for $\tv_{4,q}$ (bottom), where 
$z \in \{ \sqrt{2}, -\sqrt{2} \}$ depending on $q$.}
\label{fig:all-col}
\end{figure}

The three leftmost intersection patterns in Figure~\ref{fig:all-col} are the ones for $r=3$, where edge colours belong to $\{0,\frac{1}{2}\}$. The admissibility constraints ensure that $\adm(3,\tri)$ is in one-to-one correspondence with
the set of $\Z_2$ $1$-cycles of $\tri$~\cite{Maria15TVAlgos,matveev03-algms}. Additionally, if $\tri$ is a $1$-vertex triangulation, the only $1$-coboundary of $\tri$ is the trivial one, and $\adm(3,\tri)$ is in one-to-one correspondence with $\Hom^1(\tri,\Z_2)$, the $1$-cohomology group of $\tri$ with $\Z_2$ coefficients. We use this last fact in Section~\ref{sec:fpt}.

In the following, we talk about an admissible colouring $\theta$ and its spinal surface $S_\theta$ interchangeably, and in particular talk about the \emph{Euler characteristic} of a colouring $\theta$ defined as $\chi(S_{\theta})$. For $\theta \in \adm(\tri,3)$, we also talk indifferently of the admissible colouring and the corresponding $1$-cocycle in $\Z_2$-cohomology.

\paragraph{Weight system.} Before we can describe the algorithm, we first need to have a closer look at the weights of edges, triangles, and tetrahedra defined in Section~\ref{sec:bg} for the case $r=4$, and $q$ such that $1 \leq q \leq 8$, $\gcd(4,q)=1$.

First, note that 
the values of the quantum integers $[k]$, $0\leq k \leq 4$, are given by
\[
[1] = [3] = 1 \text{ and } [4] = 0 \ \text{for all $q$, } \ \ \text{ and }  
[2] = \left\{ \begin{array}{rl} \sqrt{2} & \text{if $q \in \{1,7\},$}\\
                                - \sqrt{2} & \text{if $q \in \{3,5\}$}.
              \end{array} \right.
\]
For the remainder of this article we define $z := -[2]$, with $z \in \{\sqrt{2}, -\sqrt{2} \}$ depending on the integer $q$.

We study the weights of faces, according to their colours. 
Let $\theta \in \adm(\tri,4)$, that is, $\theta$ colours the edges of $\tri$
with colours $0$, $\frac{1}{2}$, and $1$, such that---up to permutation---the three edges of each triangle are coloured  $(0,0,0)$, 
$(0,\frac{1}{2},\frac{1}{2})$, $(\frac{1}{2},\frac{1}{2},1)$, and $(0,1,1)$.
For a vertex $w$, an edge $e$, and a triangle $f$, we have
\[
|w|_\theta = \frac{1}{4}, \ \ \  
|e|_{\theta} = \left \{ \begin{array}{rl} 
                        1   & \textrm{if } \theta(e) = 0 \\
                        z & \textrm{if } \theta(e) = \frac{1}{2} \\
                        1   & \textrm{if } \theta(e) = 1 
                        \end{array} \right. \ \ \ \text{and} \ \ \  
|f|_{\theta} = \left \{ \begin{array}{rl@{\,\,}l@{}l@{}l@{}l@{}l}
                1 &\textrm{if } f \textrm{ is coloured} &(&0,&0,&0&) \\
                z^{-1} &\ditto &(&0,&\frac{1}{2},&\frac{1}{2}&) \\
                -z^{-1} &\ditto &(&\frac{1}{2},&\frac{1}{2},&1&) \\
                1 &\ditto &(&0,&1,&1&) .
              \end{array} \right .
\]
Tetrahedron weights are presented in Figure~\ref{fig:all-col}. 

Looking at the definition of Turaev-Viro type invariants (see Section~\ref{sec:bg}) and the observation made above, we deduce that $\tv_{4,q}$ is a Laurent polynomial in $z$, evaluated at $\pm \sqrt{2}$. To see that other values of $z$ can not lead to a topological invariant, consider two tetrahedra coloured $(\frac12, 0, \frac12, 0, \frac12, 0)$ joined along the zero coloured triangle $t$ (see Figure~\ref{fig:handle}). The two tetrahedra and the common triangle contribute a factor of $z^{-1}$ to this colouring. Performing a $2$-$3$-move (i.e., replacing two tetrahedra joined along a triangle by three tetrahedra joined along an edge) across $t$, yields three tetrahedra joined along a common edge $e$. Keeping the colouring on all boundary edges fixed, $e$ can be coloured $0$ or $1$ leading to two valid colourings. In each case, the three tetrahedra weights multiplied with the three internal triangle weights and the internal edge weight contribute a factor of $z^{-3}$ to the colouring. Since the $2$-$3$-move does not change the topology of the triangulation, the sum of the weights of the two new colourings must equal the weight of the original colouring. Hence, $z^{-1} = z^{-3} + z^{-3}$ and thus at most $z \in \{\pm \sqrt{2} \}$. However, we know from above that both solutions give rise to a topological invariant.

\begin{figure}
  \centerline{\includegraphics[width=10cm]{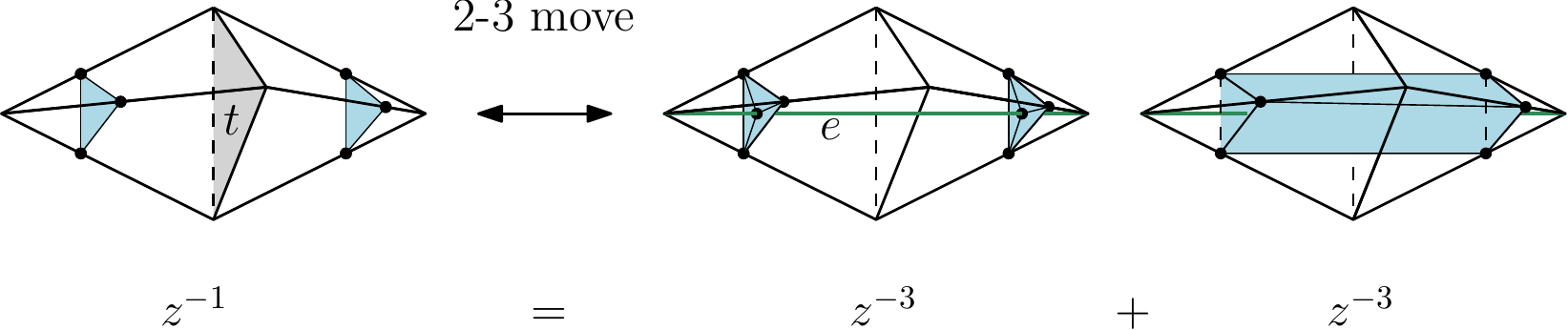}}
  \caption{$2$-$3$-move performed on two coloured tetrahedra, yielding two 
    possible colourings with equal boundary pattern: for the state sum 
    to define an invariant, $z$ must satisfy $z^2 = 2$. 
    \label{fig:handle}}
\end{figure}

By Matveev \cite{matveev87-transformations}, and independently by Piergallini \cite{Piergallini88StandardMoves}, we know that any two $1$-vertex triangulations of a $3$-manifold are connected by a sequence of $2$-$3$-moves and their inverses. There are $28$ more constellations (up to symmetry) of how two coloured tetrahedra from the list in Figure~\ref{fig:all-col} can meet along a triangle. Performing a $2$-$3$-move on one of them gives rise to an equivalent condition ($z^2 = 2$), the other $27$ do not impose any restrictions at all. This defines a very basic (although slightly lengthy) proof of the topological invariance of $\tv_{4,q}$.

\smallskip
For the rest of the article, we will omit the constant vertex weight $1/4$ when defining colouring weights and the Turaev-Viro invariant. 
Note that in particular we can follow from the above calculations that all computations are done within the extension ring $\Z[\sqrt{2}]$, where arithmetic operations are constant time representing $\sqrt{2}$ symbolically.

\paragraph{Topological interpretation of weights for $\tv_{4,q}$.} 
We interpret the weights of colourings in terms of the Euler characteristic of associated surfaces. For admissible colourings with edge colours $\{0,\frac{1}{2}\}$, we have: 

\begin{lemma}
\label{lem:adm3}
  Let $\theta \in \adm(\tri,4)$ such that no edge of $\tri$ is coloured with
  $1$, and let $S_{\theta}$ be the surface associated with $\theta$. 
  Then $\theta \in \adm(\tri,3)$, and
  $$ |\tri|_{\theta} = z^{\chi (S_\theta)} $$
  where $\chi$ denotes the Euler characteristic.
\end{lemma}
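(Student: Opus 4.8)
The plan is to compute $|\tri|_\theta$ directly from the weight definitions and show it reduces to a clean topological quantity, namely $z^{\chi(S_\theta)}$. Since no edge is coloured $1$, all edge colours lie in $\{0,\tfrac12\}$, so the only triangle colour types that can occur are $(0,0,0)$ and $(0,\tfrac12,\tfrac12)$; in particular the triangles coloured $(\tfrac12,\tfrac12,1)$ and $(0,1,1)$ never arise. This immediately gives $\theta \in \adm(\tri,3)$, since the surviving colours and triangle types are exactly those of $\adm(\tri,3)$. The key simplification is that in this regime every nonzero weight is a power of $z$: edges coloured $\tfrac12$ contribute $z$, triangles coloured $(0,\tfrac12,\tfrac12)$ contribute $z^{-1}$, triangles coloured $(0,0,0)$ and all the zero-coloured features contribute $1$, and each tetrahedron contributes a power of $z$ read off from the three admissible patterns for $r=3$ in Figure~\ref{fig:all-col}.

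The heart of the proof is to package these exponents into the Euler characteristic of $S_\theta$. I would exploit the fact that $S_\theta$ is a spinal surface: its intersection with each triangle is a union of line segments and its intersection with each tetrahedron is a union of disks. I propose to compute $\chi(S_\theta)$ via a CW/cell decomposition of $S_\theta$ induced by the triangulation $\tri$. Concretely, the disks inside tetrahedra are the $2$-cells of $S_\theta$, the arcs where $S_\theta$ meets a triangle of $\tri$ are the $1$-cells, and the points where $S_\theta$ crosses an edge of $\tri$ are the $0$-cells. Then
\[
\chi(S_\theta) = \#\{\text{crossing points on edges}\} - \#\{\text{arcs on triangles}\} + \#\{\text{disks in tetrahedra}\}.
\]
For an edge $e$ the number of crossing points is $2\theta(e)$, which is $1$ precisely when $\theta(e)=\tfrac12$ and $0$ when $\theta(e)=0$. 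The number of arcs in a triangle and the number of disks in a tetrahedron are likewise determined by the local colouring pattern, and can be read directly off Figure~\ref{fig:all-col} for the three $r=3$ patterns.

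The main step is then a careful bookkeeping argument matching the three contributions to the Euler characteristic with the three types of weight factors. I would show that the exponent of $z$ in $|\tri|_\theta$, namely
\[
(\text{number of edges coloured }\tfrac12) - (\text{number of triangles coloured }(0,\tfrac12,\tfrac12)) + (\text{sum of tetrahedron exponents}),
\]
coincides termwise with the Euler characteristic formula above: each edge coloured $\tfrac12$ contributes a single crossing vertex and a factor $z^{+1}$; each triangle coloured $(0,\tfrac12,\tfrac12)$ carries exactly one arc and a factor $z^{-1}$; and each tetrahedron's weight exponent equals the number of disks of $S_\theta$ in that tetrahedron. Since every weight factor in this regime is a power of $z$ and no sign issues arise (all the weights $1$, $z$, $z^{-1}$ are positive powers or reciprocals of $z$, with no $-z^{-1}$ appearing because colour $1$ is excluded), the product $|\tri|_\theta$ is simply $z$ raised to this common integer, giving $|\tri|_\theta = z^{\chi(S_\theta)}$.

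The hardest part will be verifying the tetrahedron bookkeeping: I must confirm, pattern by pattern for the three $r=3$ tetrahedron types in Figure~\ref{fig:all-col}, that the tabulated tetrahedron weight is exactly $z$ to the power of the number of disks that $S_\theta$ contributes inside that tetrahedron. This is a finite case check rather than a conceptual difficulty, but it is where the identity is genuinely anchored; once it holds, the global identity follows by multiplying over all cells and recognising the alternating sum as $\chi(S_\theta)$.
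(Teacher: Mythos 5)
Your proposal is correct and follows essentially the same route as the paper: both identify the cells of $S_\theta$ (crossing points on edges, arcs in triangles, disks in tetrahedra) with the faces of $\tri$ carrying nontrivial weight, and observe that the exponent of $z$ in $|\tri|_\theta$ is exactly the alternating cell count $\chi(S_\theta)$. The paper carries out the tetrahedron bookkeeping you flag as the ``hardest part'' by simply noting that each of the two nontrivial $r=3$ tetrahedron types contributes one disk and weight $z$, so no further work is needed.
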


\begin{proof}
  First note that $\theta \in \adm(\tri,4)$ with no edge coloured by $1$ 
  implies that all triangles are coloured $(0,0,0)$ or 
  $(0,\frac{1}{2},\frac{1}{2})$ (up to symmetry) and thus $\theta \in \adm(\tri,3)$.

  The proof is a direct corollary of the face weights listed above. 
  Let $S_{\theta}$ have $m_0$ vertices, $m_1$ edges, $m_{\triangle}$ triangles 
  and $m_{\square}$ quadrilaterals or, in other terms, let $m_0$ be the number of 
  edges of $\tri$ that are coloured $\frac{1}{2}$ by $\theta$, $m_1$ be the number of $(0,\frac{1}{2},\frac{1}{2})$ coloured triangles (up to symmetry), 
  $m_{\triangle}$ be the number of $(\frac{1}{2},0,\frac{1}{2},0,\frac{1}{2},0)$ coloured tetrahedra (up to symmetry), 
  and $m_{\square}$ be the number of $(0,\frac{1}{2},\frac{1}{2},0,\frac{1}{2},\frac{1}{2})$ coloured 
  tetrahedra (up to symmetry). All other faces must be zero-coloured and hence have weight $1$. 

  It follows that we have for the product of all weights
  $$ |\tri|_{\theta} = z^{m_0} \cdot z^{-m_1} \cdot z^{m_{\triangle}} \cdot z^{m_{\square}} = 
  z^{\chi (S_\theta)} $$
  where $\chi$ denotes the Euler characteristic of the surface $S_{\theta}$.
\end{proof}

For a colouring $\hat{\theta} \in \adm(\tri,4)$, we define its \emph{reduction} $\theta$ satisfying:
\begin{equation}
\label{eq:red}
  \text{For every edge $e \in E$, } \theta(e) = \hat{\theta}(e) - \lfloor \hat{\theta}(e) \rfloor .
\end{equation}
The reduction of an admissible colouring of $\adm(\tri,4)$ is an admissible colouring of $\adm(\tri,3)$. 

\begin{lemma}
  \label{lem:octopi}
  Let $\hat{\theta} \in \adm(\tri,4)$ and let $\theta$ be the reduction of 
  $\hat{\theta}$. Then 
  $$ |\tri|_{\hat{\theta}} = (-1)^{\alpha} |\tri|_{\theta} = (-1)^{\alpha} z^{\chi (S_\theta)}, $$
  where $\alpha$ denotes the number of tetrahedra coloured $(1,\frac{1}{2},\frac{1}{2},1,\frac{1}{2},\frac{1}{2})$
  in $\hat{\theta}$ (up to symmetry), i.e., the number of octagons in $S_{\hat{\theta}}$.
\end{lemma}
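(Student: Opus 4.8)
The plan is to compute the ratio $|\tri|_{\hat{\theta}}/|\tri|_{\theta}$ and show it equals $(-1)^{\alpha}$; since Lemma~\ref{lem:adm3} already gives $|\tri|_{\theta} = z^{\chi(S_\theta)}$, this yields the stated formula. The weight of a colouring is a product of vertex, edge, triangle and tetrahedron weights of the \emph{same} triangulation $\tri$, and $\hat{\theta}$ and $\theta$ agree on every edge except those coloured $1$ by $\hat{\theta}$ (which become $0$). Hence the ratio factorises as a product of per-cell ratios $|c|_{\hat{\theta}}/|c|_{\theta}$, and only cells incident to a colour-$1$ edge can contribute a factor different from $1$.

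I would first dispose of edges and triangles, where the contribution is transparent from the weight tables in Section~\ref{sec:surfaces}. For edges, reduction turns a colour-$1$ edge (weight $1$) into a colour-$0$ edge (weight $1$) and fixes colours $0$ and $\tfrac12$, so every edge ratio is $1$. For triangles, reduction fixes the colour multiset of every admissible face except $(\tfrac12,\tfrac12,1)$, which becomes $(0,\tfrac12,\tfrac12)$; the weight thereby changes from $-z^{-1}$ to $z^{-1}$, i.e.\ by a factor $-1$ with an \emph{unchanged} power of $z$. All other faces keep their weight exactly. Thus the triangle contribution to the ratio is $(-1)^{t}$, where $t$ is the number of $(\tfrac12,\tfrac12,1)$ faces of $\hat{\theta}$, and each such face has a unique colour-$1$ edge.

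Next I would treat tetrahedra by going through the finite list of admissible tetrahedron colourings and their reductions, as recorded in Figure~\ref{fig:all-col}. The essential point to verify here is that, for each type, the tetrahedron weight ratio $|t|_{\hat{\theta}}/|t|_{\theta}$ is again a \emph{pure sign}: the power of $z$ is the same before and after reduction. Granting this, the powers of $z$ coming from edges, triangles and tetrahedra recombine exactly as in the proof of Lemma~\ref{lem:adm3} to produce $z^{\chi(S_\theta)}$, and the whole content of the lemma is reduced to the sign. For this step I would record, for each type, both its tetrahedron sign $\epsilon_{\sigma} \in \{\pm 1\}$ and its set of $(\tfrac12,\tfrac12,1)$ faces; in particular the octagon $(1,\tfrac12,\tfrac12,1,\tfrac12,\tfrac12)$ reduces to the quadrilateral $(0,\tfrac12,\tfrac12,0,\tfrac12,\tfrac12)$ and has all four of its faces of type $(\tfrac12,\tfrac12,1)$.

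The crux, and the step I expect to be the main obstacle, is the final sign bookkeeping: proving $(-1)^{t}\prod_{\sigma}\epsilon_{\sigma} = (-1)^{\alpha}$. The difficulty is that a $(\tfrac12,\tfrac12,1)$ face is shared by two tetrahedra, so the global count $t$ is not a sum of independent local quantities and cannot be read off tetrahedron by tetrahedron. I would organise the accounting around the colour-$1$ edges, writing $t = \sum_{e : \hat{\theta}(e)=1} t_e$ with $t_e$ the number of $(\tfrac12,\tfrac12,1)$ faces meeting $e$, and then walk around the link circle of each colour-$1$ edge. Using the per-type data from Figure~\ref{fig:all-col}, the goal is to show that the face signs and tetrahedron signs attached to every non-octagon configuration cancel in pairs, leaving exactly one factor $-1$ for each octagon. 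This would reduce the lemma to the finite verification of the previous step.
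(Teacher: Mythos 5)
Your setup matches the paper's proof: reduction changes only the weight of $(\tfrac12,\tfrac12,1)$ triangles and of the three tetrahedron types listed in the paper, each by a factor of $-1$ and with no change in the power of $z$, so everything comes down to showing $(-1)^{t}\prod_\sigma \epsilon_\sigma = (-1)^{\alpha}$. But that final step --- which you correctly identify as the crux --- is left as a plan rather than carried out, and the plan you sketch has a real problem. You propose to distribute the sign count over the link circles of the colour-$1$ edges and argue that non-octagon contributions ``cancel in pairs''; however, a sign-changing tetrahedron can contain \emph{two} colour-$1$ edges (the octagon $(1,\tfrac12,\tfrac12,1,\tfrac12,\tfrac12)$ has two, and so does $(\tfrac12,\tfrac12,1,0,1,\tfrac12)$), so its sign $\epsilon_\sigma$ would be counted in two different link circles, and the claimed pairwise cancellation is neither obviously well-defined nor verified. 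As written, the lemma is not proved.

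The deeper issue is your assertion that $t$ ``cannot be read off tetrahedron by tetrahedron.'' It can, by double counting triangle--tetrahedron incidences: every triangle of $\tri$ lies in exactly two tetrahedra, and the octagon type contains four faces of type $(\tfrac12,\tfrac12,1)$, the types $(\tfrac12,\tfrac12,1,\tfrac12,\tfrac12,0)$ and $(\tfrac12,\tfrac12,1,0,1,\tfrac12)$ contain two each, and all other types contain none. Hence $t = (4\alpha + 2\lambda + 2\mu)/2 = 2\alpha+\lambda+\mu$, where $\lambda,\mu$ count the latter two types, while $\prod_\sigma\epsilon_\sigma = (-1)^{\alpha+\lambda+\mu}$. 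The total sign is therefore $(-1)^{3\alpha+2\lambda+2\mu} = (-1)^{\alpha}$, which is exactly the paper's two-line argument. Replacing your link-circle strategy with this incidence count closes the gap; the rest of your proposal is sound.
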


\begin{proof}
  We want to express the weight of $\hat{\theta}$ in terms of the weight of its reduction $\theta$. 

  Following the study of weights for $\tv_{4,q}$ above, the only face colourings of $\hat{\theta}$ whose weight changes in $\theta$ are: the triangle $(\frac{1}{2},\frac{1}{2},1)$, and the tetrahedra $(\frac{1}{2},\frac{1}{2},1,\frac{1}{2},\frac{1}{2},0)$, $(\frac{1}{2},\frac{1}{2},1,0,1,\frac{1}{2})$ and $(1,\frac{1}{2},\frac{1}{2},1,\frac{1}{2},\frac{1}{2})$. Their weights differ only by a factor of $(-1)$. Let $\gamma$ be the total number of those faces whose weights with $\hat{\theta}$ change in the reduction. We have that $|\tri|_{\hat{\theta}} = (-1)^{\gamma} |\tri|_{\theta}$.

  Note that $(1,\frac{1}{2},\frac{1}{2},1,\frac{1}{2},\frac{1}{2})$ contains four, $(\frac{1}{2},\frac{1}{2},1,\frac{1}{2},\frac{1}{2},0)$ 
  and $(\frac{1}{2},\frac{1}{2},1,0,1,\frac{1}{2})$ contain two, and all other tetrahedra types
  contain zero triangles of type $(\frac{1}{2},\frac{1}{2},1)$. Moreover, every triangle is 
  contained in two tetrahedra. If there are $\alpha$ tetrahedra of octagon type 
  $(1,\frac{1}{2},\frac{1}{2},1,\frac{1}{2},\frac{1}{2})$, $\lambda$ of type $(\frac{1}{2},\frac{1}{2},1,\frac{1}{2},\frac{1}{2},0)$, and $\mu$ of
  type $(\frac{1}{2},\frac{1}{2},1,0,1,\frac{1}{2})$, we have that 
  $$\gamma = \alpha + \lambda + \mu + (4\alpha + 2\lambda + 2\mu)/2 = 3\alpha + 2\lambda + 2\mu $$
  and thus $(-1)^{\gamma} = (-1)^{\alpha}$, and, by virtue of Lemma~\ref{lem:adm3},
  $$ |\tri|_{\hat{\theta}} = (-1)^{\alpha} |\tri|_{\theta} = (-1)^{\alpha} z^{\chi (S_\theta)}. $$
\end{proof}

\section{Fixed parameter tractable algorithm in $\beta_1$ for $\tv_{4,q}$}
\label{sec:fpt}

Throughout this section we assume that $\tri$ is a $1$-vertex triangulation of a closed $3$-manifold. This is a reasonable assumption since input triangulations in computational $3$-manifold topology are typically presented in this form. Moreover, note that, given an arbitrary triangulation of a closed $3$-manifold, there exists a polynomial time algorithm to construct a $1$-vertex triangulation $\tri'$ with $|\tri'| \leq |\tri|$~\cite{burton14-crushing-dcg,burton12-unknot}.\footnote{The procedure may fail in the rare case of triangulations containing two-sided projective planes.} 
	
In this section we present a \emph{fixed parameter tractable algorithm} (FPT) to compute $\tv_{4,q} (\tri)$, for any $1 \leq q \leq 8$, $\gcd(4,q)=1$, which runs in polynomial time in the size of $\tri$ as long as the first
Betti number of $\tri$ is bounded. More precisely, the algorithm has running
time $O(2^{\beta_1 (\tri, \mathbb{Z}_2)}\cdot n^3)$.

\subsection{Polynomial time algorithm at a cohomology class.} 
\label{ssec:poly}
Let $\theta$ be an admissible colouring of $\adm(\tri,3)$ (i.e., we fix a $1$-cohomology class). We define:
$$ A_{\theta} = \{ \hat{\theta} \in \adm(\tri,4) \,\,|\,\, \hat{\theta} \textrm{ reduces to } \theta \} \ \ \ \ \text{and} \ \ \ \ \tv_{4,q}(\tri,[\theta]) := \sum_{\hat{\theta} \in A_\theta} |\tri|_{\hat{\theta}} 
$$ 
to be the set of colourings reducing to $\theta$ via Equation~\ref{eq:red}, and the sum of their weights respectively. By virtue of Lemma~\ref{lem:octopi}, the weights $|\tri|_{\hat{\theta}}$ of the sum are all equal, up to a sign, to $z^{\chi(S_\theta)}$.

This partial sum of the Turaev-Viro invariant is the \emph{Turaev-Viro invariant at a cohomology class}~\cite{turaev92-invariants}. We present a polynomial time algorithm to compute $\tv_{4,q}(\tri,[\theta])$ at a given cohomology class $[\theta]$.

\paragraph{Characterisation of the space of colourings $A_{\theta}$.} Given $\theta \in \adm(\tri,3)$, we partition the set of edges $E$
of $\tri$ into three groups $E_0$, $E_1$, and $E_2$: 
\begin{itemize}
\item[-] $E_0$ contains all edges 
coloured by $\frac{1}{2}$ in $\theta$, 
\item[-] $E_1$ contains all edges coloured $0$ which occur in at least one triangle of type $(0,0,0)$, and 
\item[-] $E_2$ contains all edges 
coloured $0$ which only occur in triangles of type 
$(0,\frac{1}{2},\frac{1}{2})$. 
\end{itemize}
We characterise the space of colourings $A_{\theta}$ as the solution of a set of linear equations. 

By definition, the edges in $E_0$ are exactly the ones coloured $\frac{1}{2}$ by all colourings $\hat{\theta} \in A_{\theta}$. 
Every admissible colouring $\hat{\theta} \in A_{\theta}$ must colour triangles of type $(0,0,0)$ in $\theta$ by either $(0,0,0)$ or $(0,1,1)$, up to permutation. Hence, such a triangle $\{e_1,e_2,e_3\}$ is admissible if and only if $\hat{\theta} (e_1) + \hat{\theta} (e_2) + \hat{\theta} (e_3) = 0 \mod 2$. Considering $\hat{\theta} (e)$, $e \in E_1$, as an element of $\Z_2$, all possible colourings of these triangles can be described by a homogeneous linear system over $\Z_2$, that is, the incidence matrix of edges in $E_1$ and triangles of 
type $(0,0,0)$ in $\theta$.

Observe that every solution of this system can be extended to an admissible
colouring $\hat{\theta} \in A_{\theta}$ by assigning colour $0$ to all edges in 
$E_2$. Indeed, all triangles of type $(0,0,0)$ in $\theta$ are now of type $(0,0,0)$ or $(1,1,0)$ in $\hat{\theta}$, and all triangles of type 
$(\frac{1}{2},\frac{1}{2},0)$ in $\theta$ are now of type 
$(\frac{1}{2},\frac{1}{2},0)$ or $(\frac{1}{2},\frac{1}{2},1)$.

Finally, by definition of the set $E_2$, every assignment of colours to the edges $E_0 \cup E_1$, satisfying the conditions above, gives rise to $2^{|E_2|}$ admissible
colourings given by all possible $\{0,1\}$ assignments of colours to edges in $E_2$. Take a moment to verify that no such assignment of colours can result in a non-admissible triangle colouring.

It follows that the set $A_{\theta}$ can be described as a subspace in 
$\mathbb{Z}^{|E_1| + |E_2|}$ where the first $|E_1|$ coordinates are associated 
with the edges in $E_1$ and the last $|E_2|$ coordinates are associated with 
the edges in $E_2$ (the edges in $E_0$ are always coloured $\frac{1}{2}$ and 
thus need no explicit description). A basis of the subspace is given by a 
basis of the solution space of the linear system $\{ b_1, b_2, \ldots , b_m\}$,
concatenated with the standard basis on the last $|E_2|$ coordinates
$\{ d_1, d_2, \ldots , d_{|E_2|} \}$. The subspace naturally decomposes into two blocks of size $m$ and $|E_2|$.

\paragraph{Evaluation of $\tv_{4,q}(\tri,[\theta])$.} Hence, using the characterisation above, we can efficiently compute the cardinality of $A_{\theta}$. Furthermore, we know that all colourings in $A_{\theta}$ have the same weight, up to a sign which only depends on the parity of the number of octagons of a colouring.  

Thus, it remains to show
that we can determine the number of admissible colourings in $A_{\theta}$ with an even number
of octagons in polynomial time.

\medskip
With $\theta \in \adm(\tri,3)$ fixed, the only tetrahedra which can be of type
$(1,\frac{1}{2},\frac{1}{2},1,\frac{1}{2},\frac{1}{2})$ in $\hat{\theta} \in 
A_{\theta}$ are the ones of type 
$(0,\frac{1}{2},\frac{1}{2},0,\frac{1}{2},\frac{1}{2})$ in $\theta$. Denote 
these tetrahedra by $t_1, \ldots , t_s$, and denote their opposite $0$ coloured edges by $x_i$, $y_i$, $1 \leq i \leq s$. Considering the $0$ or $1$ colour of an edge as elements of $\Z_2$, the parity of the number of 
octagons in $\hat{\theta}$ is now given by the quadratic form: 
\begin{equation}
  \label{eq:quad}
  \sum \limits_{i = 1}^{s} \hat{\theta} (x_i) \hat{\theta} (y_i) \in 
  \mathbb{Z}_2 
\end{equation}
which can be represented by an upper triangle matrix 
$Q = \mathbb{Z}_2^{(|E_1| + |E_2|)\times(|E_1| + |E_2|)}$ by setting 
$Q_{i,j} = 1$, $i \leq j$, if and only if a specific pair of edges occurs in
an odd number of terms in Equation~\ref{eq:quad} (note that, in a generalised triangulation, two edges may appear as opposite edges in more than one tetrahedron). 

Every colouring is given by a linear combination of vectors 
$b_i$, $1 \leq i \leq m$, and $d_j$, $1 \leq j \leq |E_2|$, that is, by
a vector in $\mathbb{Z}_2^{|E_1| + |E_2|}$ of the form
$Mv \in \mathbb{Z}_2^{|E_1| + |E_2|}$, where $v \in \mathbb{Z}_2^{m + |E_2|}$, 
and $M$ is the $(|E_1|+|E_2|) \times (m + |E_2|)$-matrix 
$( b_1, b_2, \ldots , b_m, d_1, \ldots d_{|E_2|})$ with entries in 
$\mathbb{Z}_2$.

Applying the transformation $R = M^T Q M$ we obtain an $(m+|E_2|)\times(m+|E_2|)$-matrix 
satisfying that {\em (i)} the input vectors $v \in \mathbb{Z}_2^{m+|E_2|}$ are
in one-to-one correspondence with the admissible colourings in $A_{\theta}$ and
{\em (ii)} $v^T R v = 0 \mod 2$ if and only if the admissible colouring encoded 
by $v$ has an even number of tetrahedra of type 
$(1,\frac{1}{2},\frac{1}{2},1,\frac{1}{2},\frac{1}{2})$.

Following the proof of \cite[Lemma~6.29]{Lidl1997Finite}, we can transform $R$ into
an equivalent quadratic form of type direct sum of one of the three 
non-degenerate forms given in Lemma~\ref{lem:quadform}
in $\ell$ indeterminants, and the all-zero quadratic form in 
$m+|E_2|-\ell$ indeterminants.
The number of solutions for the non-degenerate part now follows from 
\cite[Theorem 6.32]{Lidl1997Finite}, 
which is all we need to evaluate
$$ \sum \limits_{\hat{\theta} \in A_{\theta}} |\tri|_{\hat{\theta}}.$$

\subsection{Fixed Parameter Tractable Algorithm in $\beta_1$}
\label{ssec:fpt}

The fixed parameter tractable algorithm to compute $\tv_{4,q}(\tri)$ simply 
consists in running the procedure described in Section~\ref{ssec:poly} for every $1$-cohomology class in $\tri$, 
and sum up all partial sums for all $1$-cohomology classes. A basis for the 
$1$-cohomology group of a triangulation may be computed in polynomial time, 
and the cohomology classes may be enumerated efficiently. 
Moreover, we can sum up the contributions from the trivial cohomology class,
and cohomology classes $\theta$ with even and odd Euler characteristic surfaces 
$S_{\theta}$ separately, resulting in the more powerful invariant 
$(\tv_{4,q} (\tri)_{\nu})_{\nu \in \{0,1,2\}}$ as defined by Matveev
\cite[Section 8.1.5]{matveev03-algms}. These three invariants sum up to $\tv_{4,q}$, but considering them separately yields to a stronger topological invariant than $\tv_{4,q}$.

\paragraph{Correctness of the algorithm.} Following Lemma~\ref{lem:adm3}, every
colouring $\hat{\theta} \in \adm (\tri,4)$ reduces to a unique colouring 
$\theta \in \adm (\tri,3)$ and can thus be associated to a unique $1$-cohomology
class (note that $\tri$ has only one vertex) \cite{Maria15TVAlgos}. By 
Lemma~\ref{lem:octopi} all colourings associated to $\theta$ are assigned the 
same weight up to a sign. By definition of the sets $E_0$, $E_1$, and $E_2$ 
every colouring in $\adm (\tri,4)$ reducing to $\theta$ is considered, and by
the definition of the quadratic form, the number of colourings with a positive
weight equal the number of solutions of the quadratic form. Thus all admissible
colourings are considered with their proper weight.

\paragraph{Running time of the algorithm.} 
Given an $n$-tetrahedron triangulation $\tri$, we transform $\tri$ into
a $1$-vertex $n'$-tetrahedron triangulation $\tri'$, $n' \leq n$ in 
$O(n^3)$ time, using a slight adaptation of the algorithm for knot complements
presented in \cite[Lemma 6]{burton12-unknot}.\footnote{In case $\tri$ contains a two-sided projective 
plane and the algorithm fails, this fact will be detected by the algorithm.}

Computing admissible colourings $\adm (\tri, 3)$ requires solving a linear 
system which can be done in $O(n'^3)$ time. By 
\cite[Proposition 1]{Maria15TVAlgos} we have $|\adm (\tri, 3)| = 2^{\beta_1 
(\tri',\mathbb{Z}_2)}$. For each $\theta \in \adm (\tri, 3)$ we can compute 
$|\tri'|_{\theta}$ and determine $E_0$, $E_1$, and $E_2$ in linear time. 
Computing admissible colourings $A_\theta \subset \adm(\tri', 4)$, again, 
requires solving a linear system which, again, requires $O(n'^3)$ time. Finally,
setting up the quadratic form consists of two matrix multiplications and 
transforming it into canonical form requires $O(m + |E_2|)$ variable relabelings
and sparse basis transforms running in $O((m + |E_2|)^2)$ time each.
Altogether the algorithm thus runs in 
$$O(2^{\beta_1 (\tri',\mathbb{Z}_2)} \cdot n^3)$$ 
time.

Additionally, the algorithm has polynomial memory complexity $O(n^2)$.

\section{$\tv_{4,q}$ is not harder than counting}
\label{sec:counting}

The computational complexity of quantum invariants, and in particular its 
connection with the counting complexity class \#P, is of particular interest
to mathematicians. For instance, it establishes deep connections between the
structure of representations of $3$-manifolds or knots and separation of
complexity classes (see Freedman's seminal work~\cite{freedman09-complexity}
for the Jones polynomial).

Computing 
$\tv_{4,1}$ is known to be \#P-hard, via a reduction from 
\#3SAT~\cite{Burton15TuraevViro,kirby04-nphard}. We prove a converse result here, specifically that computing 
$\tv_{4,q}$, $q \in \{1,3\}$, on an $n$-tetrahedron triangulation $\tri$, 
can be reduced to $\mathrm{poly}(n)$ instances of a counting problem.\footnote{Note that computing $\tv_{4,q}$ is not a \#P problem in nature.}
This is a direct consequence of Lemma~\ref{lem:octopi}. Using the same notations, consider the Laurent polynomial
\[
P_\tri(z) = \sum_{\hat{\theta} \in \adm(\tri,4)} (-1)^{\alpha} 
z^{\chi(S_{\theta})} = \sum_{m \in \Z} a_m z^m .
\]

Note that in this presentation, we
group colourings by the Euler characteristic of their reduction, as opposed to 
Section~\ref{sec:fpt} where they are grouped by reduced colourings. 
Because 
the intersection patterns between a surface $S_{\hat{\theta}}$ and the 
tetrahedra of $\tri$ is constrained to the finite set of cases presented in 
Section~\ref{sec:surfaces}, and $\chi$ is a linear function, the degree of the Laurent polynomial $P_\tri(z)$ is $O(n)$. Naturally, $\tv_{4,1}(\tri) = 
P_\tri(-\sqrt{2})$ and $\tv_{4,3}(\tri) = P_\tri(\sqrt{2})$.

For an integer $m$, let $b^+_m$ (respectively $b^-_m$) be the number of 
colourings $\hat{\theta} \in \adm(\tri,4)$ with an even number of octagons 
(respectively odd) and $\chi(S_\theta) = m$. Consequently, $a_m = b^+_m - 
b^-_m$, and computing the Laurent polynomial $P_\tri(z)$ (and consequently 
computing $\tv_{4,q}(\tri)$) reduces to $\mathrm{poly}(n)$ calls to the 
following problems: 

\vspace{0.2cm}

\begin{minipage}{0.5\textwidth}
{\sc Counting even octagons colourings:}\\
{\bf Input}: $3$-manifold triangulation $\tri$, integer $m$\\
{\bf Output}: $b^+_m$
\end{minipage}
\begin{minipage}{0.5\textwidth}
{\sc Counting odd octagons colourings:}\\
{\bf Input}: $3$-manifold triangulation $\tri$, integer $m$\\
{\bf Output}: $b^-_m$
\end{minipage}

\vspace{0.2cm}

These problems belong to the counting class \#P, as checking if an 
arbitrary assignment of edge colours $\hat{\theta}$ is an admissible 
colouring of $\adm(\tri,4)$ is polynomial time computable, as is
computing the parity of the number of octagons and the Euler characteristic 
of $S_{\theta}$.

\section{Practical significance of the algorithm}
\label{sec:expts}

\paragraph{The power of $\tv_{4,q}$ to distinguish between $3$-manifolds}
The significance of the FPT-algorithm from Section~\ref{ssec:fpt} to compute $\tv_{4,q}$ strongly depends
on the power of $\tv_{4,q}$ to distinguish between non-homeomorphic 
$3$-manifolds.

Since there is no canonical way to quantify this power, we give evidence of the power of $\tv_{4,q}$ along two directions. We first present an infinite family of non-homeomorphic but homotopy equivalent $3$-manifolds, which are provably distinguished by $\tv_{4,1}$. 
We then run practical experiments on large censuses of $3$-manifold triangulations.

The Turaev-Viro invariants of lens spaces have been studied in~\cite{Sokolov97LensSpacesTVInv,Yamada95LensSpacesTVInv}.
\begin{thm}[Based on~\cite{Sokolov97LensSpacesTVInv,Yamada95LensSpacesTVInv}]
  \label{thm:ls}
  Let $L(p,q)$ be the lens space with co-prime parameters $p$ and $q$, 
  $0 < q < p$, and let $k > 0$. Then we have 
  \[
\arraycolsep=2pt
\begin{array}{ccc} 
    \tv_{4,1} ( L(16k,q) ) = \left \{ \begin{array}{ll} 1 & \text{ if } q = \pm 1 \, \mathrm{mod}\, 8 \\
    0 & \text{ otherwise}, 
    \end{array}\right .  & \text{ and }  & 
    \tv_{4,1} ( L(16k-8,q) ) = \left \{ 
    \begin{array}{ll} 1 & \text{ if } q = \pm 3 \, \mathrm{mod}\, 8 \\
    0 & \text{ otherwise}. \end{array}\right .\\
    \end{array}
  \]
\end{thm}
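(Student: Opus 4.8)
The plan is to derive the stated values by specialising the known closed-form evaluation of Turaev--Viro invariants of lens spaces due to Yamada and Sokolov to the parameters $r=4$, $q_{\mathrm{TV}}=1$ (so that $z=-[2]=-\sqrt 2$). The natural vehicle is the Turaev--Walker--Roberts identity $\tv_{4,1}(M) = |\tau_{4,1}(M)|^2$, where $\tau_{4,1}$ denotes the associated Reshetikhin--Turaev invariant: for a lens space $\tau_{4,1}$ reduces to a finite Gauss sum over the colour set $I=\{0,\tfrac12,1\}$, obtained by evaluating the $S$- and $T$-matrices of the underlying modular category along a surgery (continued-fraction) presentation of $L(p,q)$. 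So the first step is to write this Gauss sum down explicitly for $r=4$: the three colours carry quantum dimensions $[1]=1$, $[2]=\sqrt2$, $[3]=1$ and framing phases governed by the conformal weights $h_0=0$, $h_{1/2}=3/16$, $h_1=1/2$, so the only non-trivial root of unity that enters is a primitive $16$th root (the framing anomaly, with $c=3/2$, contributes a further $16$th root).

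The second step is to exploit the divisibility of $p$. Since the $T$-matrix phases are $16$th roots of unity and the self-linking contribution of the surgery curve scales with $p$, I expect the resulting sum to depend only on the residues $p \bmod 16$ and $q \bmod 8$ (the latter because $n^2 \bmod 8 \in \{0,1,4\}$, so $q\,n^2 \bmod 8$ is governed by $q \bmod 8$). The two families in the statement are precisely $p \equiv 0 \pmod{16}$, i.e. $p=16k$ with $p/8=2k$ even, and $p \equiv 8 \pmod{16}$, i.e. $p=16k-8=8(2k-1)$ with $p/8=2k-1$ odd. I would substitute each of these into the Gauss sum. Because $p$ is even, only odd $q$ are coprime to $p$, so it suffices to treat the four residues $q \in \{1,3,5,7\} \bmod 8$, which split exactly as $\pm 1$ and $\pm 3$.

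The third and final step is the arithmetic evaluation: for each combination of $p \bmod 16$ and $q \bmod 8$ I compute the quadratic Gauss sum and take the squared modulus, verifying it equals $1$ precisely in the claimed cases and $0$ otherwise. The homeomorphism $L(p,q) \cong L(p,-q)$ leaves $\tv$ invariant, collapsing $\{1,7\}$ and $\{3,5\}$ to single computations, so only a handful of sums are needed. The main obstacle I anticipate is bookkeeping rather than depth: making the normalisation of $\tv_{4,1}$ (the vertex weights $1/4 = 1/\mathcal{D}^2$), the orientation/framing convention of the surgery presentation, and the $c/24$ anomaly all mutually consistent, so that the reduction modulo $16$ and $8$ is exact and the squared modulus lands on the integers $0$ and $1$ rather than an unnormalised multiple. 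As a sanity check I would also recompute $\tv_{4,1}(L(p,q))$ directly with the machinery of Section~\ref{sec:fpt}: since $p$ is even we have $\beta_1(L(p,q),\Z_2)=1$, hence exactly two cohomology classes, and summing the two quadratic-form counts $\tv_{4,1}(\tri,[0]) + \tv_{4,1}(\tri,[\theta])$ on a standard layered triangulation of $L(p,q)$ should reproduce the same dichotomy.
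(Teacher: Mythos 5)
The paper does not actually prove this theorem: it is stated as imported from Yamada and Sokolov (hence the bracketed ``Based on'' attribution), and the text supplies no argument beyond the citation. Your proposal is therefore best judged as a reconstruction of how those references obtain the result, and in outline it follows the same route they do: present $L(p,q)$ by surgery, evaluate the Reshetikhin--Turaev invariant $\tau_{4,1}$ as a Gauss sum over the three colours with $T$-matrix phases $e^{2\pi i(h_j - c/24)}$ ($h_j = j(j+1)/4$, $c=3/2$, so indeed sixteenth roots of unity), and invoke the Turaev--Walker--Roberts identity $\tv_{4,1} = |\tau_{4,1}|^2$. Your numerical inputs ($[2]=\sqrt2$ at $q_{\mathrm{TV}}=1$, $\beta_1(L(p,q),\Z_2)=1$ for $p$ even, the reduction of the four odd residues mod $8$ to the two classes $\pm1$ and $\pm3$) are all correct.

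The genuine gap is in your second step. The heuristic ``the self-linking contribution scales with $p$ and $qn^2 \bmod 8$ is governed by $q \bmod 8$'' implicitly treats $L(p,q)$ as given by surgery on a \emph{single} unknot, which is only the case for $q=1$. For general $q$ the surgery presentation is a chain of unknots whose framings come from a continued-fraction expansion of $p/q$ of unbounded length, and the invariant is an iterated Gauss sum; that the final answer depends only on $p \bmod 16$ and $q \bmod 8$ is a nontrivial consequence of Gauss-sum reciprocity (equivalently, of the closed formulas of Jeffrey and Yamada for $\tau_r(L(p,q))$, which involve the inverse $q^*$ of $q$ modulo $p$ and a Dedekind-sum phase). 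That reciprocity step, together with the final case-by-case arithmetic, is precisely where the content of the theorem lives, and both are deferred in your write-up. The clean way to close the gap is to quote Yamada's explicit formula for $|\tau_r(L(p,q))|$ and specialise it to $r=4$ --- which is, in effect, exactly what the paper does by citing \cite{Sokolov97LensSpacesTVInv,Yamada95LensSpacesTVInv}. Your proposed cross-check via the paper's own FPT algorithm on layered triangulations is a good empirical safeguard but of course only verifies finitely many $(p,q)$.
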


Hence, given the FPT algorithm introduced above, we deduce:
\begin{corollary}
  Given triangulated $3$-manifolds $M$ and $N$ secretly homeomorphic to lens spaces 
  $L(8k,q_1)$ and $L(8\ell,q_2)$, $k,\ell > 0 $, $q_1,q_2 \in \{1,3\}$. 
  Then there exists a polynomial time procedure to decide the 
  homeomorphism problem for $M$ and $N$.
\end{corollary}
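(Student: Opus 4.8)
The plan is to use the pair of invariants $(H_1(\,\cdot\,;\Z),\,\tv_{4,1})$ and to argue that, restricted to the family $\{L(8k,q)\mid k>0,\ q\in\{1,3\}\}$, this pair is a \emph{complete} invariant, while both of its components are computable in polynomial time on the given triangulations. Since both $\tv_{4,1}$ and integral homology are topological invariants, any discrepancy in either one certifies non-homeomorphism; the substance of the argument is to show that agreement in both forces $M$ and $N$ to be homeomorphic within this family.

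First I would dispose of the case of differing homology. Computing $H_1(M;\Z)$ and $H_1(N;\Z)$ is a polynomial-time linear-algebra computation over $\Z$, and for the hidden lens spaces these groups are $\Z/8k$ and $\Z/8\ell$. Hence if the computed groups differ we have $k\neq\ell$ and output ``not homeomorphic''. This reduces the problem to the case $k=\ell$, i.e.\ a common value $p=8k$. At this point I would invoke the main Theorem to compute $\tv_{4,1}$ in polynomial time: the key observation is that the relevant parameter is bounded, since $H_1(L(8k,q);\Z)=\Z/8k$ with $8k$ even gives $H_1(L(8k,q);\Z_2)=\Z_2$ and therefore $\beta_1(\,\cdot\,,\Z_2)=1$. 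Thus the FPT algorithm runs in time $O(2^{1}\cdot n^3)=O(n^3)$. I would also note that the required conversion to a $1$-vertex triangulation always succeeds here: lens spaces are orientable, hence contain no two-sided projective plane, so the obstruction mentioned in the footnote to the $1$-vertex reduction does not arise.

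It then remains to apply Theorem~\ref{thm:ls} to separate the two possible values $q\in\{1,3\}$ for the fixed modulus $p=8k$. I would split on the residue of $p$ modulo $16$. If $k$ is even then $p=16k'$, and Theorem~\ref{thm:ls} gives $\tv_{4,1}(L(16k',q))=1$ exactly when $q\equiv\pm1\pmod 8$; since $1\equiv 1$ but $3\not\equiv\pm1\pmod 8$, we get $\tv_{4,1}(L(p,1))=1\neq 0=\tv_{4,1}(L(p,3))$. If $k$ is odd then $p=16k'-8$, and the theorem gives value $1$ exactly when $q\equiv\pm3\pmod 8$; since $3\equiv 3$ but $1\not\equiv\pm3\pmod 8$, we get $\tv_{4,1}(L(p,1))=0\neq 1=\tv_{4,1}(L(p,3))$. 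In both subcases $\tv_{4,1}$ takes distinct values on the two lens-space parameters, so the decision rule ``output homeomorphic iff $H_1$ agrees and $\tv_{4,1}$ agrees'' is correct: equal $q$ yields equal invariants, while distinct $q$ (necessarily one being $1$ and the other $3$) is detected by $\tv_{4,1}$.

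The main obstacle is not conceptual but lies in the two bookkeeping checks: confirming that the parameter $\beta_1(\,\cdot\,,\Z_2)$ is uniformly bounded (here equal to $1$) so that the FPT algorithm is genuinely polynomial, and verifying that the residue arithmetic of Theorem~\ref{thm:ls} cleanly separates $q=1$ from $q=3$ across both cases $p\equiv 0$ and $p\equiv 8 \pmod{16}$. Both are routine, and together they yield the claimed polynomial-time procedure.
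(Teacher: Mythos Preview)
Your proof is correct and follows the same approach as the paper: compute integral homology to extract $k$ and $\ell$, then use the FPT algorithm (with $\beta_1(\,\cdot\,,\Z_2)=1$) to compute $\tv_{4,1}$ and invoke Theorem~\ref{thm:ls} to separate $q=1$ from $q=3$. You actually supply more detail than the paper does---the explicit residue case split on $p\bmod 16$ and the remark that orientability rules out the two-sided projective plane obstruction to the $1$-vertex reduction---but the structure of the argument is identical.
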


\begin{proof}
  We use homology calculations to determine $k$ and $\ell$. If $k \neq \ell$, then
  $M$ and $N$ are not homeomorphic. If $k = \ell$ we know $M$ and $N$ are 
  homotopy equivalent.
  We compute $\tv_{4,1}$ of both $M$  and $N$. Since both $M$ and $N$ have first Betti number equal to $1$, this is 
  a polynomial time procedure. By Theorem~\ref{thm:ls} we conclude that
  $M$ and $N$ are homeomorphic if and only if both values for $\tv_{4,1}$ coincide.
\end{proof}

To determine the power of $\tv_{4,q}$ on a more general level, we run large scale 
experiments on the census of $13\,397$ distinct topological types of minimal 
triangulations of $3$-manifolds with up to $11$ tetrahedra \cite{burton11-genus,matveev03-algms}, and on the 
Hodgson-Weeks census of $11\,031$ topologically distinct hyperbolic manifolds \cite{hodgson94-closedhypcensus}. 
In our experiments we use an implementation of the algorithm presented in 
Section~\ref{ssec:fpt} to compute the finer $3$-tuple of invariants 
$\tv_{4,q}(\tri)_\nu$, $\nu \in \{0,1,2\}$ (see Section~\ref{ssec:fpt}).

The $13\,397$ distinct topologies of the up to $11$ tetrahedra census split into $697$ groups of manifolds with equal integral homology. Combining integral homology with 
$\tv_{4,q,\nu}$, 
$q \in \{1,3\}$, $0 \leq \nu \leq 2$, we are able to split the manifolds further into $1\,205$ groups.
The $11\,031$ manifolds in the Hodgson-Weeks census split into $516$ groups
of equal integral homology. Combining integral homology with $\tv_{4,q,\nu}$
yields $816$ groups of manifolds.

Using $\tv_{4,q}(\tri)_\nu$, $\nu \in \{0,1,2\}$, we are thus able to
distinguish nearly twice as many pairs of $3$-manifolds than with integral 
homology alone.

\paragraph{Indicative timings}
We have for the performance of our algorithm compared to previous state of the
art implementations to compute $\tv_{4,q}$ and integral homology:

\noindent
\begin{tabular}{|l|r|r|r|}
  \hline
  & FPT-alg. from Sec.~\ref{ssec:fpt} & FPT-alg. from \cite{Burton15TuraevViro} & int. homology in Regina~\cite{regina} \\
  \hline
  \hline
  $\leq 11$ tetrahedra census   &$10.96$ sec.&$498$ sec.&$7.72$ sec. \\
  \hline
  Hodgson-Weeks census          &$12.71$ sec.&$1720$ sec.&$14.71$ sec. \\
  \hline
\end{tabular}

\smallskip
In conclusion, the FPT algorithm for $\tv_{4,q}$ presented in this article is of practical importance. Combined with homology, it allows to refine the classification of $3$-manifolds, on our censuses, by a factor of $1.73$ and $1.58$ respectively, at a cost comparable to the computation of homology. We hope this will make $\tv_{4,q}$ a standard pre computation in $3$-manifold topology. We will make the implementation of the algorithm available in Regina~\cite{regina}.

\newpage

\bibliographystyle{plain}
\bibliography{bibliography,pure}

\newpage

\appendix

\section{Homology and cohomology}
\label{app:homology}

In the following section we give a very brief introduction into (co)homology
theory. For more details see \cite{hatcher02-algebraic}.

Let $\tri$ be a generalised $3$-manifold triangulation. For the {\em ring of coefficients} 
$\Z_2 := \Z / 2\Z$, the {\em group of $p$-chains}, $0 \leq p \leq 3$, denoted 
$\Chains_p(\tri,\Z_2)$, of $\tri$ is the group of formal sums of $p$-dimensional faces with 
$\Z_2$ coefficients. The \emph{boundary operator} is a linear operator 
$\partial_p: \Chains_p(\tri,\Z_2) \rightarrow 
\Chains_{p-1}(\tri,\Z_2)$ such that $\partial_p \sigma = \partial_p \{v_0, 
\cdots , v_p\} = \sum_{j=0}^p \{v_0,\cdots ,\widehat{v_j}, \cdots,v_p\}$,
where $\sigma$ is a face of $\tri$, $\{v_0, \ldots, v_p\}$ represents 
$\sigma$ as a face of a tetrahedron of $\tri$ in local vertices $v_0, 
\ldots, v_p$, and $\widehat{v_j}$ means $v_j$ is deleted from the list. Denote 
by $\Cycles_p(\tri,\Z_2)$ and $\Boundaries_{p-1}(\tri,\Z_2)$ the kernel and the 
image of $\partial_p$ respectively. Observing $\partial_p \circ 
\partial_{p+1}=0$, we define the {\em $p$-th homology group} $\Hom_p(\tri,\Z_2)$ of 
$\tri$ by the quotient $\Hom_p(\tri,\Z_2) = \Cycles_p(\tri,\Z_2)/
\Boundaries_p(\tri,\Z_2)$. 

Whenever the ring of coefficients is a field (eg. as above) homology groups are 
vector spaces, otherwise they are modules. If the ring of coefficients is equal to
the integers we refer to them as {\em integral homology groups}. For each finite 
field $\mathbb{F}$, integral homology groups determine homology groups with 
coefficients in $\mathbb{F}$ by virtue of the universal coefficient theorem 
\cite{hatcher02-algebraic}. Hence, as a topological invariant they are at least 
as powerful as homology with coefficients in $\mathbb{F}$.

\medskip
The concept of {\em cohomology} is in many ways dual to homology, but more 
abstract and endowed with more algebraic structure. It is defined in the 
following way: The {\em group of $p$-cochains} $\Chains^p(\tri,\Z_2)$ is the
formal sum of linear maps of $p$-dimensional faces of $\tri$ into $\Z_2$. The 
\emph{coboundary operator} is a linear operator $\delta^p: 
\Chains^{p-1}(\tri,\Z_2) \rightarrow \Chains^{p}(\tri,\Z_2)$ such that 
for all $\phi \in \Chains^{p-1}(\tri,\Z_2)$ we have $\delta^p (\phi) = 
\phi \circ \partial_p$. As above, {\em $p$-cocycles} are the elements in the
kernel of $\delta^{p+1}$, {\em $p$-coboundaries} are elements in the image
of $\delta^{p}$, and the \emph{$p$-th cohomology group} 
$\Hom^p(\tri,\Z_2)$ is defined as the $p$-cocycles factored by the 
$p$-coboundaries.

The exact correspondence between elements of homology and cohomology is best
illustrated by {\em Poincar\'e duality} stating that for closed $d$-manifold 
triangulations $\tri$, $\Hom^p(\tri,\Z_2)$ and $\Hom_{d-p}(\tri,\Z_2)$ are
dual as vector spaces. 

For instance, let $S$ be a $2$-cycle in $\tri$ 
representing a class in $\in \Hom_{2}(\tri,\Z_2)$.
We can perturb $S$ such that it contains no vertex of $\tri$ and intersects
every tetrahedron of $\tri$ in a single triangle (separating one vertex from the
other three) or a single quadrilateral (separating pairs of vertices).
It follows that every edge of $\tri$ intersects $S$ in $0$ or $1$ points.
Then the $1$-cochain defined by mapping every edge intersecting $S$ to $1$ and
mapping all other edges to $0$ represents the Poincar\'e dual of $S$ in 
$ \Hom^1(\tri,\Z_2) $. We will use this exact duality to switch between 
admissible colourings in $\adm(\tri,3)$ and surfaces defined by these colourings
(see Section~\ref{sec:surfaces}).

\section{Weight formulas for Turaev-Viro invariants}
\label{app:weight}

In this section, we introduce the weight formulas for the original Turaev-Viro 
invariants $\tv_{r,q}$ defined in~\cite{turaev92-invariants}. For this, let $r$ 
and $q$ be two integers, such that $r \geq 3 $ and $0 < q < 2r$, with 
$\gcd(r,q) = 1$.

Our notation differs slightly from Turaev and Viro \cite{turaev92-invariants};
most notably, Turaev and Viro do not consider
triangle weights $|f|_\theta$, but instead incorporate an additional
factor of $|f|_\theta^{1/2}$ into each tetrahedron weight
$|t|_\theta$ and $|t'|_\theta$ for the two tetrahedra $t$ and $t'$ containing $f$.
This choice simplifies the notation and avoids unnecessary
(but harmless) ambiguities when taking square roots.

Let $\zeta = e^{i \pi q / r} \in \C$.  Note that our conditions imply that
$\zeta$ is a $(2r)$th root of unity, and that $\zeta^2$ is a
\emph{primitive} $r$th root of unity; that is,
$(\zeta^2)^k \neq 1$ for $k=1,\ldots,r-1$.
For each positive integer $i$, we define
$[i] = (\zeta^i-\zeta^{-i})/(\zeta-\zeta^{-1})$ and,
as a special case, $[0] = 1$.
We next define the ``bracket factorial''
$[i]! = [i]\,[i-1] \ldots [0]$.
Note that $[r] = 0$, and thus $[i]! = 0$ for all $i \geq r$.

We give every vertex constant weight
\begin{equation*}
|v|_\theta = \frac{\left|\zeta-\zeta^{-1}\right|^2}{2r} ,
\end{equation*}
and to each edge $e$ of colour $i \in I$ (i.e.,
for which $\theta(e) = i$) we give the weight
\begin{equation*}
|e|_\theta = (-1)^{2i} \cdot [2i+1].
\end{equation*}
A triangle $f$ whose three edges have colours $i,j,k \in I$ is assigned the
weight
\[ |f|_\theta = (-1)^{i+j+k} \cdot
    \frac{[i+j-k]! \cdot [i+k-j]! \cdot [j+k-i]!}{[i+j+k+1]!}. \]
Note that the parity condition and triangle inequalities ensure that
the argument inside each bracket factorial is a non-negative integer.

\begin{figure}
    \begin{center}
        \includegraphics[width = .15\textwidth]{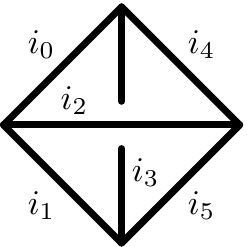}
        \caption{Edge colours of a tetrahedron. \label{fig:tet}}
    \end{center}
\end{figure}

Finally, let $t$ be a tetrahedron with edge colours
$i_0,i_1,i_2,i_3,i_4,i_5$ as indicated in Figure~\ref{fig:tet}. In particular,
the four triangles surrounding $t$ have colours
$(i_0,i_1,i_3)$, $(i_0,i_2,i_4)$, $(i_1,i_2,i_5)$ and $(i_3,i_4,i_5)$,
and the three pairs of opposite edges have colours
$(i_0,i_5)$, $(i_1,i_4)$ and $(i_2,i_3)$.  We define
\begin{align*}
\tau_\phi(t,z) &=
    [z-i_0-i_1-i_3]! \cdot [z-i_0-i_2-i_4]! \cdot
    [z-i_1-i_2-i_5]! \cdot [z-i_3-i_4-i_5]!\,, \\
\kappa_\phi(t,z) &= [i_0+i_1+i_4+i_5-z]! \cdot
                  [i_0+i_2+i_3+i_5-z]! \cdot
                  [i_1+i_2+i_3+i_4-z]!
\end{align*}
for all integers $z$ such that the bracket factorials above all have non-negative
arguments; equivalently, for all integers $z$ in the range
$\zmin \leq z \leq \zmax$ with
\begin{align*}
\zmin &= \max\{i_0+i_1+i_3,\ i_0+i_2+i_4,\ i_1+i_2+i_5,\ i_3+i_4+i_5\}\,; \\
\zmax &= \min\{i_0+i_1+i_4+i_5,\ i_0+i_2+i_3+i_5,\ i_1+i_2+i_3+i_4\}.
\end{align*}
Note that, as before, the parity condition ensures
that the argument inside each bracket factorial above is an integer.
We then declare the weight of tetrahedron $t$ to be
\begin{equation*}
|t|_\phi = \sum_{\zmin \leq z \leq \zmax}
    \frac{(-1)^z \cdot [z+1]!}{\tau_\phi(t,z) \cdot \kappa_\phi(t,z)},
\end{equation*}

\bigskip
Note that all weights are polynomials on $\zeta$ with rational coefficients, 
where $\zeta = e^{i \pi q/r}$.
\end{document}